\numberwithin{equation}{section}
\newcommand{\E}{{\mathbb E}}
\newcommand{\Q}{{\mathbb Q}}
\newcommand{\Z}{{\mathbb Z}}
\newcommand{\C}{{\mathbb C}}
\newcommand{\F}{{\mathbb F}}
\newcommand{\A}{{\mathbb A}}
\newcommand{\p}{\mathfrak{p}}
\newcommand{\GL}{{\rm GL}}
\numberwithin{equation}{subsection}
\newtheorem{theorem}[equation]{Theorem}
\newtheorem{lemma}[equation]{Lemma}
\newtheorem{remark}[equation]{Remark}
\newtheorem{example}[equation]{Example}
\begin{document}

\title{Ratios of periods for tensor product motives}

\date{}
\author{\bf Chandrasheel Bhagwat \ \ \& \ \ A. Raghuram}
\address{Indian Institute of Science Education and Research (IISER) \\ Dr.\ Homi Bhabha Road \\ Pashan \\ Pune 411008 \\ India.}
\email{cbhagwat@iiserpune.ac.in, raghuram@iiserpune.ac.in}
\thanks{C.B. is partially supported by DST-INSPIRE Faculty scheme, award number [IFA- 11MA -05].
A.R. is partially supported by the National Science Foundation (NSF), award number DMS-0856113, and an Alexander von Humboldt Research Fellowship. }
\date{\today}
\subjclass[2000]{Primary 11F67; Secondary 11G09}

\begin{abstract}
In this article we prove some period relations for the ratio of Deligne's periods for certain tensor product motives. These period relations give a motivic interpretation for certain algebraicity results for ratios of successive critical values for Rankin-Selberg L-functions for ${\rm GL}_n \times {\rm GL}_{n'}$ proved by G\"unter Harder and the second author.
\end{abstract}

\maketitle

\section{Introduction and motivation}

\subsection{A classical example}
\label{sec:classical}

To motivate the period relations proved in this paper let us recall a classical theorem due to Shimura \cite{shimura}
on the critical values of $L$-functions attached to modular forms. Let $\varphi = \sum a_n q^n$ be a primitive holomorphic cusp form of weight $k$ for $\Gamma_0(N).$ For a Dirichlet character $\chi,$ let $L_f(s, \varphi, \chi)
= \sum_n a_n \chi(n)/n^s.$ There exist $u^{\pm}(\varphi) \in {\mathbb C}^\times$ (the periods of $\varphi$) such that
for any integer $m$ with $1 \leq m \leq k-1$,  we have
$$
L_f(m,\varphi,\chi) \sim (2\pi i)^m \gamma(\chi) u^{\pm}(\varphi),
$$
where $\chi(-1) = \pm(-1)^m$, $\gamma(\chi)$ is the Gau\ss\ sum of $\chi$,
and $\sim$ means equality up to an element of the number field
${\mathbb Q}(\varphi, \chi) := {\mathbb Q}\left(\{a_n\} \cup \{ {\rm values \ of}\ \chi\} \right)$.
Now suppose $1 \leq m \leq k-2$, then
$$
\frac{L_f(m,\varphi,\chi)}{L_f(m+1,\varphi,\chi)} \ \sim \ (2\pi i)^{-1} \frac{u^{\pm}(\varphi)}{u^{\mp}(\varphi)},
$$
assuming the denominator of the left hand side is nonzero. The $2\pi$ on the right hand side can be thought of as coming from the $L$-factors at infinity, and if we define
$\Omega(\varphi) := \tfrac{1}{i} \tfrac{u^+(\varphi)}{u^-(\varphi)},$ then
the ratio of successive critical values  of the completed $L$-function looks like:
\begin{equation}
\label{eqn:ratio-classical}
\frac{L(m,\varphi,\chi)}{L(m+1,\varphi,\chi)} \ \sim \ \Omega(\varphi)^{\chi(-1)(-1)^m}.
\end{equation}
Such an algebraicity result for ratios of successive critical values has been generalized by G\"unter~Harder; 
see \cite{harder} and the references therein to his earlier papers. This was further generalized by Harder and  the second author  \cite{harder-raghuram} which we now briefly recall.

\medskip

\subsection{A generalization}
\label{sec:neoclassical}
Let $\pi$ (resp., $\pi'$) be a cohomological cuspidal automorphic representation of ${\rm GL}_n(\A)$
(resp., ${\rm GL}_{n'}(\A)$), where $\A$ is the ring of ad\`eles of $\Q$.  Implicit in this data is a pure dominant integral highest weight $\lambda$
(resp., $\lambda'$) for the algebraic group $\GL_n/\Q$ (resp.,  $\GL_{n'}/\Q$). Assume that $n$ is even and $n'$ is odd.
Let $\E$ be a number field containing the rationality fields $\Q(\pi)$ and $\Q(\pi').$
To the representation $\pi$ and to any embedding $\iota : E \to \C$, there exist certain {\it relative periods} $\Omega(\pi, \iota) \in \C^\times$ and the collection of these periods, as $\iota$ varies, is well-defined up to
$\E^\times.$ (See  \cite{harder-raghuram}.)
One may say that one has attached $\Omega(\pi) \in (\E \otimes_\Q \C)^\times/\E^\times.$ Suppose that
$m \in \tfrac12 + \Z$ is such that both $m$ and $m+1$ are critical for the Rankin-Selberg $L$-function
$L(s, \pi \times \pi').$ Then under a certain assumption involving only $\lambda$ and $\lambda'$ (called the combinatorial lemma in {\it loc.cit.}), by studying rank one Eisenstein cohomology of $\GL_{n+n'}$, it has been shown that
\begin{equation}
\label{eqn:ratio-automorphic}
\frac{L(m, \pi \times \pi')}{L(m+1, \pi \times \pi')} \ \sim \
\Omega(\pi)^{\epsilon_{\pi'}\epsilon_m}\, c(\pi_\infty, \pi'_\infty),
\end{equation}
where $\sim$ means equality in $(\E \otimes_\Q \C)^\times/\E^\times;$
$\epsilon_{\pi'}$ is a sign depending only on $\pi'$;
$\epsilon_m$ depends only on the parity of the integer $m - \tfrac12$;
$c(\pi_\infty, \pi'_\infty)$ is a nonzero complex number depending only on the representations at infinity (it is expected that this number is rational).
Note a piquant feature: it seems that the representation $\pi$ has a bigger role to play in the right hand side, and that $\pi'$ contributes only a sign in the exponent of $\Omega(\pi).$

\medskip

\subsection{Motivic interpretation}

Every known algebraicity statement on critical values of $L$-functions comes under the umbrella of a celebrated conjecture Deligne \cite{deligne} on critical values of motivic $L$-functions. {\it The purpose of this article is to look at
(\ref{eqn:ratio-classical}) and (\ref{eqn:ratio-automorphic}) from the perspective of Deligne's conjecture.}
Let $M$ be a critical motive over $\Q$ with coefficients in a field $\E$. Then Deligne attaches two periods
$c^{\pm}(M) \in (\E \otimes_\Q \C)^\times/\E^\times$ to $M$ by comparing the Betti and de~Rham realizations of $M$. The finite part of the $\E \otimes_\Q \C$-valued $L$-function $L_f(s,M)$ is defined in terms of the $\ell$-adic realization of $M$. Suppose $s=0$ is critical for the $L$-function then Deligne predicts that $L_f(0,M) \sim c^+(M)$.
Further, if  $s=1$ is also critical then
$L_f(1,M) \sim (2\pi i)^{d^-(M)} c^-(M)$ for a certain $d^-(M) \in \Z$.
Hence, the ratio $L_f(0,M)/L_f(1,M) \sim (2\pi i)^{-d^-(M)} c^+(M)/c^-(M).$
As in the case of modular forms, the power of $2\pi$ can be interpreted as a relevant ratio of $L$-factors from infinity (which depend only on the Hodge types of $M$). Hence if we wish to consider a ratio of successive critical values then we need to consider the ratio of periods $c^+(M)/c^-(M)$. To see (\ref{eqn:ratio-automorphic}), assume that $M$ is pure and of even rank, and consider another pure motive $M'$ whose rank is odd. Assume that $M \otimes M'$ is critical. Further, assume that all the nonzero Hodge numbers of $M$ and $M'$ are $1.$
One of the main results of this paper (Theorem~\ref{thm:main-q}) states that in
$(\E \otimes_\Q \C)^\times/\E^\times$ we have
\begin{equation}
\label{eqn:ratio-motivic}
\frac{c^{+}(M\otimes M')}{c^{-}(M\otimes M')}  \ = \ \left( \frac{c^{+}(M)}{c^{-}(M)}\right)^{\epsilon(M')},
\end{equation}
where $\epsilon(M')$ is the sign by which complex conjugation acts on the  middle Hodge type of $M'$.

\medskip

The proof of these period relations is based on the formalism of Yoshida \cite{Yo} on periods of tensor product motives which need not only $c^\pm(M)$ but also other invariants attached to $M$. In \S\ref{sec:yoshida} we recall the relevant parts of his paper that we need. \S\ref{sec:main-q} and \S\ref{sec:motives-tot-real} are the two main sections of this article.

\medskip

In \S\ref{sec:parity} we prove a couple of variations of (\ref{eqn:ratio-motivic}) when the ranks of both the motives have the same parities. We start with the easy case when both $M$ and $M'$ have even rank; in this situation we have the following period relation in $(\E \otimes_\Q \C)^\times/\E^\times$:
\begin{equation}
\label{eqn:ratio-motivic-both-even}
c^{+}(M\otimes M') \ = \ c^{-}(M\otimes M').
\end{equation}
The relations (\ref{eqn:ratio-motivic}) and (\ref{eqn:ratio-motivic-both-even}) are combined together
in Theorem~\ref{thm:combined-even-odd} where we present a period relation when $M$ has even rank, and $M'$ is a direct sum of critical motives. Next, we consider the somewhat more difficult case when both $M$ and $M'$ have odd rank; in this situation we have the following equality in $(\E \otimes_\Q \C)^\times/\E^\times$:
\begin{equation}
\label{eqn:ratio-motivic-both-odd}
\frac{c^{+}(M\otimes M')}{c^{-}(M\otimes M')}  \ = \ \left( \frac{c^{+}(M)}{c^{-}(M)}\right)^{\epsilon(M')} \left( \frac{c^{+}(M')}{c^{-}(M')}\right)^{\epsilon(M)}.
\end{equation}
See Theorem~\ref{thm:main-odd}.

There has been a long tradition, since the foundational paper by Deligne \cite{deligne}, on period relations for motivic periods and what such relations say about the special values of automorphic $L$-functions; see, for example, 
Blasius~\cite{blasius-orloff}, Harris~\cite{harris97}, Panchishkin~\cite{Pa}, and Yoshida~\cite{Yo}. The reader should view this article from the perspective of such a tradition.

\bigskip
{\small
\noindent{\it Acknowledgements:} It is a pleasure to thank Dipendra Prasad who made several suggestions in his attempts to understand the motivic period relations. The comments in \S\ref{sec:hodge-one} and the formulation of
Theorem~\ref{thm:combined-even-odd} were suggested by him. We thank the referee for his/her meticulous comments which helped us enormously in revising the article.}

\section{Special polynomials, Motivic periods, and results of Yoshida}
\label{sec:yoshida}

In this section we begin by briefly reviewing the notion of a critical motive. Then we review some results of Yoshida \cite{Yo} that will be useful for our proofs.

\subsection{Critical motives}

Let $M$ be a pure motive defined over $\Q$ with coefficients in a number field $\mathbb{E}$. Every pure motive over 
$\Q$ conjecturally arises, up to a Tate twist, as a submotive of the cohomology motive $H^{\ast}(X)$ of an algebraic variety $X$ over $\Q$. In this paper we consider the motives in the sense of their Betti, de~Rham and $\ell$-adic realizations as in Deligne \cite{deligne}.

Let $H_{B}(M)$ be the
{\it Betti realization} of $M$. It is a finite-dimensional vector space over $\E$. The rank $d(M)$ of $M$ is defined to be $\text{dim}_{\E}(H_{B}(M))$. Write
\[ H_{B}(M) \ = \ H^{+}_{B}(M) \oplus H^{-}_{B}(M), \]
where $H^{\pm}_{B}(M)$ are the $\pm 1$-eigenspaces for the action of complex conjugation $\rho$ on
$H_{B}(M)$. Let $d^{\pm}(M)$ be the $\E$-dimension of $H^{\pm}_{B}(M)$. The Betti realization has a
{\it Hodge decomposition}:
\begin{equation}
\label{eqn:hodge-decomp}
H_{B}(M) \otimes_{\Q} \C \ = \ \bigoplus_{p,q \in \Z} H^{p,q}(M),
\end{equation}
where $H^{p,q}(M)$ is a free $\E \otimes \C$-module of rank $h^{p,q}_M$. The numbers $h^{p,q}_M$ are called the {\it Hodge numbers} of
$M.$  Purity of $M$ means that there is an integer $w$ such that
 $H^{p,q}(M) = \left\{0\right\}$ if $p+q \neq w.$ Henceforth, we assume that all our motives are pure.
The number $w$ is called the weight of $M$. We also have $\rho(H^{p,q}(M)) = H^{q,p}(M);$ and hence $\rho$ acts on the (possibly zero) middle Hodge type $H^{w/2,w/2}(M).$

\smallskip

Let  $H_{DR}(M)$ be the {\it de~Rham realization} of $M$; it is a $d(M)$-dimensional vector space over $\mathbb{E}$. There is a comparison isomorphism of $\E \otimes_\Q \C$-modules:
\[
I : H_{B}(M) \otimes_{\Q} \C  \ \longrightarrow \ H_{DR}(M) \otimes_{\Q} \C.
\]
The de~Rham realization has a {\it Hodge filtration} $F^p(M)$ which is a decreasing filtration of $\E$-subspaces of $H_{DR}(M)$ such that
\[I \left(\bigoplus_{p' \geq p} H^{p',q}(M)\right) \ = \  F^{p}(M) \otimes_{\Q} \C.\]
Write the Hodge filtration as
\begin{equation}
\label{Hodge}
H_{DR}(M) = F^{p_1}(M) \supset F^{p_2}(M) \supset \cdots \supset F^{p_m}(M) \supset F^{p_{m}+1}(M) = \left\{0\right\};
\end{equation}
all the inclusions are proper and there are no other filtration-pieces between two successive members. The numbers $p_\mu$ are such that, $h^{p_{\mu}, w-p_\mu}_M \neq 0$. We assume that the numbers $p_\mu$ are maximal among all the choices.
Let $s_\mu = h^{p_\mu, w-p_\mu}_M$ for $1 \leq \mu \leq m.$ Purity plus the action of complex conjugation on Hodge types says that the numbers $p_j$  and $s_\mu$ satisfy
$p_{j} + p_{m+1-j} = w, \forall\  1 \leq j \leq m,$ and
$s_\mu = s_{m+1-\mu}, \forall\  1 \leq \mu \leq m.$

\smallskip

We say that the motive {\it $M$ is critical} if there exist $\mathfrak{p}^+, \mathfrak{p}^- \in \Z$ such that
\[ \sum \limits_{i=1}^{\mathfrak{p}^+} s_i= d^+(M), \quad \sum \limits_{i=1}^{\mathfrak{p}^-} s_i= d^-(M). \]
In this case one says that $F^{\pm}(M)$ exists and equals $F^{\mathfrak{p}^{\pm}}(M)$.
It is easy to see that $M$ is critical if and only if complex conjugation acts by a scalar on the middle Hodge type (provided the middle Hodge type exists); in this situation we denote this scalar by $\epsilon(M)$. 

\smallskip

\subsection{Period invariants}
\label{sec:yoshida-polynomials}
The period matrix of $M$ is defined in terms of $\E$-bases for the spaces $H^{\pm}_{B}(M)$ and $H_{DR}(M)$.
Let $\left\{v_{1},v_{2},\ldots,v_{d^{+}(M)}\right\}$ be an $\E$-basis of $H^{+}_{B}(M)$, and similarly,
$\left\{v_{d^{+}(M) + 1},v_{d^{+}(M) + 2},\ldots,v_{d(M)}\right\}$ be an $\E$-basis of $H^{-}_{B}(M)$. Let $\left\{w_{1},w_{2},\ldots,w_{d(M)}\right\}$ be a basis of $H_{DR}(M)$ over $\E$ such that $\left\{w_{s_{1}+s_{2}+ \ldots +s_{\mu -1}+1},\ldots,w_{d(M)}\right\}$ is a basis of $F^{p_\mu}(M)$ for $1 \leq \mu \leq m$.
The period matrix $X$ of $M$ is the matrix which represents the comparison isomorphism between the two realizations of $M$ with respect to the bases chosen above. 

\smallskip

Let $\mathbb{F}$ be a number field. Suppose $d$ is a positive integer. Fix a partition $s_1 + s_2 + \ldots + s_m = d$. Let $P_m$ be the corresponding lower parabolic subgroup of $\GL(d,F)$. Given an $m$-tuple of integers $(a_i)_{1 \leq i \leq m}$, define an algebraic character $\lambda_1$ of $P_m$ by
\[
\lambda_1 \left( {\begin{pmatrix} p_{11}& 0 & \ldots & 0 \\ \ast & p_{22} & \ldots & 0\\\ast & \ast & \ddots & \ldots \\ \ast & \ast & \ast & p_{mm} \end{pmatrix}} \right) = \prod \limits_{1 \leq i \leq m}(\text{det}~p_{ii})^{a_i};  \quad p_{ii} \in \GL(s_i).
\]
Let $d = d^+ + d^-$. Given $k^+, k^- \in \Z$, define a character $\lambda_2$ of $\GL(d^+) \times  \GL(d^-)$ by
\[
\lambda_2 \left( \begin{pmatrix} a & 0 \\ 0 & b \end{pmatrix} \right) = ~(\text{det}~a)^{k^+} (\text{det}~b)^{k^-}, \quad a \in \GL(d^+),~ b \in \GL(d^-).
\]

Let $f(x)$ be a polynomial with rational coefficients which satisfies the following equivariance condition with respect to the left action of $P_m$ and the right action of $\GL(d^+) \times \GL(d^-)$ on the matrix ring $M_{d}(F)$:
\begin{equation}
\label{inv}
f(p x \gamma) = \lambda_1(p) f(x) \lambda_2({\gamma}),  \quad \forall~ p \in P_m, \ \
\forall~ \gamma \in \GL(d^+) \times \GL(d^-).
\end{equation}
A polynomial satisfying (\ref{inv})
is said to have admissibility type $\left\{(a_1, a_2, \ldots, a_m), (k^+, k^-)\right\}$.
Yoshida \cite[Theorem 1]{Yo} proves that  the space of polynomials of a given admissibility type is atmost one.

 \begin{lemma}\label{multi}
 If the polynomial $f(x)$ has admissibility type $\left\{(a_1, a_2, \ldots, a_m), (k_1^+, k_1^-)\right\}$,
 and $g(x)$  has admissibility type  $\left\{(b_1, b_2, \ldots, b_m), (k_2^+, k_2^-)\right\}$, then the polynomial $h(x) = f(x)g(x)$ has admissible type is given by
 \[
 \left\{(a_1 + b_1, a_2 + b_2, \ldots, a_m + b_m), (k_1^+ + k_2^+, k_1^- + k_2^-)\right\}.
 \]
 \end{lemma}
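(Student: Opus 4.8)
The plan is to verify the equivariance condition (\ref{inv}) directly for the product $h(x) = f(x)g(x)$, using only the multiplicativity of the characters $\lambda_1$ and $\lambda_2$. Write $\lambda_1^{(a)}$ for the character of $P_m$ attached to the tuple $(a_1,\ldots,a_m)$ and $\lambda_2^{(k^+,k^-)}$ for the character of $\GL(d^+)\times\GL(d^-)$ attached to $(k^+,k^-)$, so that $f(px\gamma) = \lambda_1^{(a)}(p)\,f(x)\,\lambda_2^{(k_1^+,k_1^-)}(\gamma)$ and $g(px\gamma) = \lambda_1^{(b)}(p)\,g(x)\,\lambda_2^{(k_2^+,k_2^-)}(\gamma)$ for all $p \in P_m$ and $\gamma \in \GL(d^+)\times\GL(d^-)$.

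First I would substitute $px\gamma$ into $h$ and use the two equivariance identities:
\[
h(px\gamma) \ = \ f(px\gamma)\,g(px\gamma) \ = \ \lambda_1^{(a)}(p)\, f(x)\, \lambda_2^{(k_1^+,k_1^-)}(\gamma)\cdot \lambda_1^{(b)}(p)\, g(x)\, \lambda_2^{(k_2^+,k_2^-)}(\gamma).
\]
Since the values of $f$, $g$, and of all the characters lie in the commutative field $F$, every factor on the right commutes, so the right-hand side rearranges to
\[
\bigl(\lambda_1^{(a)}(p)\lambda_1^{(b)}(p)\bigr)\, f(x)g(x)\, \bigl(\lambda_2^{(k_1^+,k_1^-)}(\gamma)\lambda_2^{(k_2^+,k_2^-)}(\gamma)\bigr).
\]
Next I would identify the two products of characters from their defining formulas: for $p$ with diagonal blocks $p_{ii}\in\GL(s_i)$ one has $\lambda_1^{(a)}(p)\lambda_1^{(b)}(p) = \prod_i(\det p_{ii})^{a_i}\prod_i(\det p_{ii})^{b_i} = \prod_i(\det p_{ii})^{a_i+b_i} = \lambda_1^{(a+b)}(p)$, and likewise $\lambda_2^{(k_1^+,k_1^-)}(\gamma)\lambda_2^{(k_2^+,k_2^-)}(\gamma) = \lambda_2^{(k_1^++k_2^+,\,k_1^-+k_2^-)}(\gamma)$. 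Hence $h(px\gamma) = \lambda_1^{(a+b)}(p)\,h(x)\,\lambda_2^{(k_1^++k_2^+,\,k_1^-+k_2^-)}(\gamma)$, and since $h$ has rational coefficients (being a product of two such polynomials) it is an admissible polynomial of the asserted type.

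There is essentially no obstacle here: the argument is a one-line computation, and the only points to keep straight are that the scalar values involved all lie in the commutative field $F$ (so the order of multiplication is immaterial) and that the product of two algebraic characters of a group is the character whose defining integer parameters are the sums of the given ones. Note also that we do not need Yoshida's uniqueness statement \cite[Theorem 1]{Yo} quoted just above — only the bookkeeping encoded in the phrase ``admissibility type'' is used, and a possibly-zero $h$ trivially satisfies (\ref{inv}) in any case.
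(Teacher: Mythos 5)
Your argument is correct and is exactly what the paper means by its one-line proof ``Follows from (\ref{inv})'': you substitute $px\gamma$ into $h=fg$, apply the equivariance of $f$ and $g$, and use that the characters multiply to the character with summed exponents. The only difference is that you spell out the bookkeeping, which the paper leaves implicit.
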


\begin{proof} Follows from (\ref{inv}).
\end{proof}

\begin{example} Let $f(x) = {\rm det}(x)$ for a matrix $x \in M_{d}(F)$. Then $f(x)$ is of admissibility type
$\left\{(1,1,1,\ldots,1),(1,1)\right\}$.
\end{example}

 Let $f^{\pm}(x)$ be the upper left  (resp., upper right) $d^{\pm} \times d^{\pm}$ determinant of $x$.
 Then it can be seen that the admissibility types of $f^+(x)$ and $f^-(x)$ are respectively given by
 \[
 \{(\underbrace{1,1,1,\ldots,1}_{\mathfrak{p}^{+}},0,0, \ldots, 0),(1,0)\},
 \]
 \[
 \{(\underbrace{1,1,1,\ldots,1}_{\mathfrak{p}^{-}},0,0, \ldots, 0),(0,1)\}.
 \]

Yoshida interprets the period invariants of the period matrix $X$ via some special polynomials as $\delta(M) = f(X)$ and $c^{\pm}(M) = f^{\pm}(X)$.
The values in $(\E\otimes \C)^\times$ of these polynomials on a period matrix defines an element of 
$(\E\otimes \C)^\times/\E^\times$ which is independent of the de~Rham and Betti bases chosen to define the period matrix.

\smallskip

\subsection{Tensor product motives}

The category of motives over $\Q$ with coefficients in $\E$ admits a tensor product. The realizations for the tensor product are naturally identified with the tensor products of the realizations.  Yoshida \cite[Proposition 12]{Yo} describes the admissibility types of the polynomials which correspond to the periods $c^{\pm}( M  \otimes M')$ of the tensor product motive $M \otimes M'$.

Let $X$ and $Y$ be the period matrices of the motives $M$ and $M',$ respectively. Let $R = \E \otimes_\Q \C$. Suppose $d(M)$ and $d(M')$ are the ranks of $M$ and $M'$ respectively. The numbers $d^{\pm}(M')$ are the dimensions of the 
$\pm 1$-eigenspaces for $H_{B}(M')$.  Write $X$ and $Y$ in the following way:
\[
X = \begin{pmatrix} X_1^+ & X_1^- \\  X_2^+ & X_2^- \\ \vdots & \vdots \\ X_{d(M)}^{+} & X_{d(M)}^{-} \end{pmatrix}, \ \ \
~Y = \begin{pmatrix} Y_1^+ & Y_1^- \\  Y_2^+ & Y_2^- \\ \vdots & \vdots \\ Y_{d(M')}^{+} & Y_{d(M')}^{-} \end{pmatrix},
\]
where $X_{i}^\pm \in R^{d^{\pm}(M)} ~\text{and}~ Y_l^\pm \in R^{d^{\pm}(M')}$.

Given $1 \leq i \leq d(M)$, let $1 \leq \mu \leq m$ be such that
\[ s_1 + s_2 + \ldots s_{\mu-1} < i \leq s_1 + s_2 + \ldots + s_\mu.\]
The Hodge level $w(X_i)^{\pm}$ is defined to be the integer $p_\mu$ (\textit{cf.} (\ref {Hodge})). The Hodge level $w(Y_l)^{\pm}$
 is defined analogously. Suppose the motive $M \otimes M'$ is critical.
 Consider the Hodge filtrations of the motives $M$, $M'$ and $M \otimes M'$.
 \[ H_{dR}(M) = F^{i_1}(M) \supsetneq F^{i_2}(M) \supsetneq \ldots \supsetneq F^{i_{m_1}}(M) \supsetneq (0),\]
\[ H_{dR}(M') = F^{j_1}(M') \supsetneq F^{j_2}(M') \supsetneq \ldots \supsetneq F^{j_{m_2}}(M') \supsetneq (0),\]
\[ H_{dR}(M \otimes M') = F^{k_1}(M \otimes M') \supsetneq F^{k_2}(M \otimes M') \supsetneq \ldots \supsetneq F^{k_m}(M \otimes M') \supsetneq (0).\]

Let $u_i$ denote the Hodge numbers of $M \otimes M'$.  Hence there exist integers $q^{+}, q^{-}$ such that
\begin{equation} \label{hodgetensor} u_1 + u_2 + \ldots + u_{q^{\pm}} = d^{\pm}(M \otimes M'). \end{equation} The integers $a^{\pm}_{\mu}$ are defined by
\begin{equation} \label{amu} a^{\pm}_{\mu} \quad = \quad \left|\left\{ l : 1 \leq l \leq d(M'),~ p_\mu + w(Y_l) < k_{q^{\pm}} \right\}\right|.
\end{equation}

From the definition of the period, it follows that $c^{+}(M \otimes M')$ is the determinant of the square matrix $Z^+$ of
 size $d^{+}(M) d^{+}(M') + d^{-}(M) d^{-}(M')$ defined by
\[
Z^{+} = (X_i^{+}\otimes Y_l^{+},~X_i^{-}\otimes Y_l^{-} : w(X_i^{\pm}) + w(Y_l^{\pm}) < k_{q^{+}}).
\]

Similarly, one observes that $c^{-}(M \otimes M')$ is the determinant of the square matrix $Z^-$ of size
 $d^{+}(M) d^{-}(M') + d^{-}(M) d^{+}(M')$ defined by
\[
Z^{-} = (X_i^{+} \otimes Y_l^{-}, ~X_i^{-}\otimes Y_l^{+}  : w(X_i^{\pm}) + w(Y_l^{\pm}) < k_{q^{-}}).
\]

The determinants of $Z^{\pm}$ can be expressed in the form $h^{\pm}(X,Y),$ where $h^{\pm}(x,y)$ are polynomial functions. For a fixed $y$, the function $h^{\pm}(x,y)$  has admissibility type
  \[
  \left\{(a^{\pm}_{\mu} : 1 \leq \mu \leq m, (d^{\pm}(M'), d^{\mp}(M'))\right\}.
  \]

Define the integers $(a^{\ast})^{\pm}_{\nu}$ for the motive $M'$ analogous to (\ref{amu}) above.
It follows that the data $(\left\{(a^{\ast})^{\pm}_{\nu} \right\}),(d^{\pm}(M), d^{\mp}(M))$ describes the admissibility type
 of $h^{\pm}(x,y)$ for a fixed $x$.

From the uniqueness property \cite[Theorem 1]{Yo}, it follows that the polynomials $h^{\pm}(x,y)$ can be expressed as $h^{\pm}(x,y) = \phi^{\pm}(x) \psi^{\pm}(y)$ where  $\phi^{\pm}(x)$ and $\psi^{\pm}(y)$ are polynomials with the following admissibility types respectively:
  \begin{equation} \label{adm1} \left\{a^{\pm}_{\mu} : ~(d^{\pm}(M'), d^{\mp}(M'))\right\}, \end{equation}
  \begin{equation} \label{adm2} \left\{(a^{\ast})^{\pm}_{\nu} : ~(d^{\pm}(M), d^{\mp}(M)) \right\}.\end{equation}

\medskip

\section{Period relations for motives over $\Q$}
\label{sec:main-q}

\subsection{}
We now state and prove one of the main results of this paper.

\begin{theorem}
\label{thm:main-q}
Let $M$ and $M'$ be pure motives over $\Q$ with coefficients in a number field $\E$. Suppose they satisfy the following properties:
\begin{enumerate}
\item rank($M$) = $d(M)$ is even, and rank($M'$) = $d(M')$ is odd.
\item All the nonzero Hodge numbers $h^{p,q}_{M}$ and $h^{p,q}_{M'}$ are equal to 1.
\item The tensor product motive $M \otimes M'$ is critical.
\end{enumerate}
Hypothesis (1) and (2) imply that $M$ and $M'$ are critical, and furthermore that complex conjugation acts as a scalar, denoted
$\epsilon(M')$, on the one-dimensional middle Hodge type of $M'$. Then the periods  $c^{\pm}(M)$ and $c^{\pm}(M \otimes M')$ are related by the following equation in
$(\E \otimes_\Q \C)^\times/\E^\times$:
\[ \frac{c^{+}(M\otimes M')}{c^{-}(M\otimes M')}  \ = \ \left( \frac{c^{+}(M)}{c^{-}(M)}\right)^{\epsilon(M')}. \]
\end{theorem}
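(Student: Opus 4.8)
The plan is to feed everything through Yoshida's computation of $c^{\pm}(M\otimes M')$ recalled in \S\ref{sec:yoshida}, reducing the asserted identity to a matching of admissibility types together with the uniqueness statement of \cite[Theorem 1]{Yo}. First I would extract the consequences of hypotheses (1)--(2). Since all nonzero Hodge numbers of $M$ and of $M'$ equal $1$, every filtration step has $s_\mu = 1$, so $m = d(M)$, $\mathfrak{p}^{\pm} = d^{\pm}(M)$, and likewise for $M'$. Because $d(M)$ is even with all Hodge numbers $1$, the middle Hodge type $H^{w/2,w/2}(M)$ must vanish (otherwise, since the non-middle types pair off under $\rho$, the rank $d(M)$ would be odd), whence $d^{+}(M) = d^{-}(M) = d(M)/2$. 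For $M'$, whose rank is odd, the same parity count forces the middle Hodge type to be present and one-dimensional, and criticality of $M\otimes M'$ (hence of $M'$) makes $\rho$ act on it by the scalar $\epsilon(M')$; counting $\rho$-eigenvalues on the conjugate pairs then gives $d^{+}(M') - d^{-}(M') = \epsilon(M')$.

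The key step is to show that the matrices $Z^{+}$ and $Z^{-}$ are cut out by the same piece of the Hodge filtration of $M\otimes M'$. Using $\rho_{M\otimes M'} = \rho_{M}\otimes\rho_{M'}$ one computes
\[
d^{+}(M\otimes M') - d^{-}(M\otimes M') \ = \ \bigl(d^{+}(M) - d^{-}(M)\bigr)\bigl(d^{+}(M') - d^{-}(M')\bigr) \ = \ 0,
\]
so $d^{+}(M\otimes M') = d^{-}(M\otimes M') = d(M)d(M')/2$. The defining equations $(\ref{hodgetensor})$ for $q^{+}$ and $q^{-}$ then have the same right-hand side, and since the Hodge numbers $u_i$ of $M\otimes M'$ are positive integers with strictly increasing partial sums this forces $q^{+} = q^{-}$, hence $k_{q^{+}} = k_{q^{-}}$. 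Looking at $(\ref{amu})$ and its analogue for $M'$, the integers $a^{\pm}_{\mu}$ and $(a^{\ast})^{\pm}_{\nu}$ depend on the sign only through that common cutoff, so $a^{+}_{\mu} = a^{-}_{\mu}$ for all $\mu$ and $(a^{\ast})^{+}_{\nu} = (a^{\ast})^{-}_{\nu}$ for all $\nu$.

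Now I would finish with two applications of uniqueness. By $(\ref{adm2})$ and the equality $d^{+}(M) = d^{-}(M)$, the polynomials $\psi^{+}(y)$ and $\psi^{-}(y)$ have the identical admissibility type $\{((a^{\ast})_{\nu}),\,(d(M)/2,\,d(M)/2)\}$, hence are $\Q$-proportional, so $\psi^{+}(Y)/\psi^{-}(Y)\in\Q^{\times}\subset\E^{\times}$; since $c^{\pm}(M\otimes M') = \phi^{\pm}(X)\psi^{\pm}(Y)$, the left-hand side of the theorem equals $\phi^{+}(X)/\phi^{-}(X)$ in $(\E\otimes_\Q\C)^{\times}/\E^{\times}$. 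Next, by $(\ref{adm1})$ the polynomials $\phi^{\pm}(x)$ share the $\lambda_1$-character $(a_\mu)$ and have $\lambda_2$-characters $(d^{+}(M'),d^{-}(M'))$ and $(d^{-}(M'),d^{+}(M'))$, while $f^{+}(x)$ and $f^{-}(x)$ share the $\lambda_1$-character $(\underbrace{1,\dots,1}_{d(M)/2},0,\dots,0)$ and have $\lambda_2$-characters $(1,0)$ and $(0,1)$. Hence by Lemma~\ref{multi}, when $\epsilon(M') = 1$ the products $\phi^{+}(x)f^{-}(x)$ and $\phi^{-}(x)f^{+}(x)$ have the same admissibility type (the $\lambda_2$-characters $(d^{+}(M'),d^{-}(M')+1)$ and $(d^{-}(M')+1,d^{+}(M'))$ agreeing exactly because $d^{+}(M') = d^{-}(M')+1$), and when $\epsilon(M') = -1$ the products $\phi^{+}(x)f^{+}(x)$ and $\phi^{-}(x)f^{-}(x)$ have the same type. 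Invoking uniqueness once more yields $\phi^{+}(X)/\phi^{-}(X) = \bigl(f^{+}(X)/f^{-}(X)\bigr)^{\epsilon(M')} = \bigl(c^{+}(M)/c^{-}(M)\bigr)^{\epsilon(M')}$, which is the assertion.

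The hard part will be the middle step: establishing $q^{+} = q^{-}$, i.e.\ that $Z^{+}$ and $Z^{-}$ are built from the same de~Rham space $F^{k_{q^{+}}}(M\otimes M')$. This is precisely where the parity hypothesis on the ranks enters; it is what makes the $M'$-periods $\psi^{\pm}$ cancel and leaves only the symmetric discrepancy $(\epsilon(M'),-\epsilon(M'))$ between the $\lambda_2$-data of $\phi^{+}$ and $\phi^{-}$, which is exactly absorbed by a single factor of $f^{+}/f^{-}$ to the power $\epsilon(M')$. Everything else is bookkeeping with Lemma~\ref{multi} and the uniqueness theorem of Yoshida.
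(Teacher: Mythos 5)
Your proposal follows essentially the same route as the paper's proof: establish $d^{+}(M) = d^{-}(M)$ and $d^{+}(M') = d^{-}(M') + \epsilon(M')$, deduce $q^{+} = q^{-}$ and hence $a^{\pm}_{\mu}$, $(a^{\ast})^{\pm}_{\nu}$ coincide, then match admissibility types of $\phi^{+}f^{\mp\epsilon(M')}$ against $\phi^{-}f^{\pm\epsilon(M')}$ via Lemma~\ref{multi} and Yoshida's uniqueness. The one place you are slightly more careful than the paper is the derivation of $d^{+}(M\otimes M') = d^{-}(M\otimes M')$, which you correctly obtain from $\rho_{M\otimes M'} = \rho_{M}\otimes\rho_{M'}$ and $d^{+}(M) = d^{-}(M)$ rather than simply from ``critical of even rank.''
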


\smallskip

\subsection{Proof of Theorem~\ref{thm:main-q}}
\label{sec:proof}

The condition on Hodge numbers of $M$ and $M'$ guarantees that the motives $M$ and $M'$ are critical, and
$d^{+}(M) = d^{-}(M) = d(M)/2$ and  $d^{+}(M') =   d^{-}(M') \pm 1$; indeed,
$d^{+}(M') = d^{-}(M') + \epsilon(M')$. Consider the motive $M\otimes M'$. Since it is critical and of even rank, it follows that $d^{\pm}(M \otimes M') = d(M)d(M')/2.$

\smallskip

Let $X$ and $Y$ be the period matrices of $M$ and $M',$ resp. The period $c^{\pm}(M\otimes M')$ is given by a polynomial $h^{\pm}(X,Y)$.
 Here $h^{\pm}(x,y) = \phi^{\pm}(x) \psi^{\pm}(y)$ and the polynomials $\phi^{\pm}(x)$ and $\psi^{\pm}(y)$ are of certain admissible types. The desired property follows from the analogous relation between the invariant polynomials $\phi^\pm(x)$, $\psi^\pm(y)$ and $f^\pm(x)$. To prove it, we compare their admissibility types under the hypothesis of the theorem.

\smallskip

Since $d^{+}(M) = d^{-}(M)$ and  $d^{+}(M') = d^{-}(M') + \epsilon(M')$,
we have  $\p_{M}^{+} = \p_{M}^{-} $, $\p_{M'}^{+} = \p_{M'}^{-} + \epsilon(M')$,  $d^{+}(M \otimes M') = d^{-}(M \otimes M')$ and $q^+ = q^{-}$. As a result, we have the following relations:
  \begin{equation}
  \label{mu}a^{+}_{\mu} = a^{-}_{\mu} \quad {\rm for} \ \ 1 \leq \mu \leq d(M),
  \end{equation}
\begin{equation}
\label{nu} (a^{\ast})^{+}_{\nu} = (a^{\ast})^{-}_{\nu} \quad {\rm for} \ \  1 \leq \nu \leq d(M') .
\end{equation}

\smallskip

 From (\ref{adm1}), (\ref{adm2}), (\ref{mu}) and (\ref{nu}), it follows that the admissibility types (and hence the functions themselves in view of their uniqueness) of $\psi^{+}(y)$ and $\psi^{-}(y)$ are equal up to $\Q^\times$-multiples, which we write as
 $$
 \psi^+(y) \approx \psi^-(y).
 $$
 Furthermore, the above conditions also imply that the admissibility types of $\phi^\pm(x)$ and $f^{\pm}(x)$ are related as
 we now explain. It is convenient to consider two cases: \\

\noindent \textbf{Case (i):}  $\epsilon(M') =1$.\\

 Here $d^{+}(M') = d^{-}(M') + 1$. From Lemma~\ref{multi}, and (\ref{adm1}), (\ref{adm2}), (\ref{mu}) and (\ref{nu}),
 we see that the admissibility types of $\phi^{+}(x)f^{-}(x)$ and $\phi^{-}(x)f^{+}(x)$ are given respectively by:
 \[ \left\{ (a^{+}_{1} + 1, ~ a^{+}_{2} + 1, \ldots a^{+}_{d(M)/2} + 1,~ a^{+}_{1+(d(M)/2)}, \ldots,
    a^{+}_{d(M)}),~(d^{+}(M'),~ d^{-}(M')+1) \right\},    \]
 \[ \left\{ (a^{+}_{1} + 1, ~ a^{+}_{2} + 1, \ldots a^{+}_{d(M)/2} + 1, ~a^{+}_{1+(d(M)/2)}, \ldots,
     a^{+}_{d(M)}),~(d^{-}(M')+1,~ d^{+}(M')) \right\}\]
which are identical.  Hence, from the uniqueness property of invariant polynomials of a given admissibility type we have
    \[ \phi^{+}(x) f^{-} (x) \ \approx \ \phi^{-}(x) f^{+} (x), \ \ (\text{equality up to}~ \Q^{\times}).\]
Hence we get
    \[
    \phi^{+}(x)  \psi^+(y) f^{-} (x) \ \approx \ \phi^{-}(x)  \psi^-(y)f^{+} (x),
    \]
an equality of polynomials up to $\Q^\times$; evaluating on period matrices, we get
\[
c^+(M \otimes M') c^-(M) \ \approx \ c^-(M \otimes M') c^+(M),
\]
which is an equality in $R^\times = (\E \otimes_\Q \C)^\times$ up to $\E^\times$. This concludes the proof in case (i). \\

\noindent \textbf{Case (ii):}  $\epsilon(M') = -1$.\\

 Here $d^{+}(M') = d^{-}(M') - 1$. From an analogous argument as in the previous case we get
    \[ \phi^{+}(x) f^{+} (x) \ \approx \ \phi^{-}(x) f^{-} (x).\]
The rest is similar, and in this case we end up with
 \[
 c^+(M \otimes M') c^+(M) \ \approx \ c^-(M \otimes M') c^-(M).
\]

\medskip

\subsection{} \label{sec:hodge-one}
 Our proof of Theorem~\ref{thm:main-q} relies on the facts that $d^{+}(M) = d^{-}(M)$ and $d^{+}(M') = d^{-}(M') \pm 1$. The assumption on the Hodge numbers in Theorem~\ref{thm:main-q} guarantees these conditions which in turn are valid for the motives coming from cohomological cuspidal representations. Indeed,
Theorem~\ref{thm:main-q} can be rephrased by the foregoing conditions on the dimensions $d^{\pm}(M)$ and
$d^{\pm}(M')$ in the hypotheses.

We may further relax the hypotheses on $M'$. Suppose $M'$ is a pure critical motive of odd rank.
Then, complex conjugation acts by a scalar $\epsilon(M')$ on the middle Hodge type, and
$d^{+}(M') = d^{-}(M') + \epsilon(M')k$, where $k$ is the dimension of this middle Hodge type of $M'$. Then the same proof gives:
\begin{equation}
\label{eqn:variation}
\frac{c^{+}(M\otimes M')}{c^{-}(M\otimes M')}  \ = \ \left( \frac{c^{+}(M)}{c^{-}(M)}\right)^{\epsilon(M')k}.
\end{equation}

\medskip

\section{Period relations for motives over totally real fields}
\label{sec:motives-tot-real}

\subsection{A factorization result for Deligne's periods over a totally real field}
Let $\F$ be a totally real number field. Let $I_{\F}$ be the set of all real embeddings of $\F$ into $\C$. A motive $M$ over $\F$ with coefficients in a number field $\E$ has the following realizations: For each $\sigma \in I_{\F}$ we have a
Betti realization of $M$, denoted $H_{B}(\sigma,M),$  which is a vector space of dimension $d(M)$ over $\E$ together with an action of the complex conjugation which we denote $\rho_\sigma.$
The  de~Rham realization of $M$, denoted $H_{DR}(M)$, is a free $\E \otimes_\Q \F$ module of rank $d(M)$ with a decreasing filtration $F^p_{DR}(M)$ of $\E \otimes_\Q \F$-modules.
For each $\sigma \in I_{\F}$, there is a comparison isomorphism of $\E \otimes_\Q \C$-modules
\[
I_{\sigma} : H_{B}(\sigma, M) \otimes_{\Q} \C \ \longrightarrow \ H_{DR}(M) \otimes_{\F,\sigma} \C.
\]
There is a Hodge decomposition: $H_{B}(\sigma, M) \otimes_{\Q} \C = \oplus_{p,q} H^{p,q}_\sigma(M)$ and
$\rho_{\sigma}$ maps $H^{p,q}_\sigma(M)$ onto $H^{q,p}_\sigma(M)$. The rank of $ H^{p,q}_\sigma(M)$ is independent of $\sigma$ and is denoted by $h^{p,q}_M$; these are the Hodge numbers of $M.$

Suppose $M$ is critical, then for each $\sigma \in I_\F$ the periods $c^{\pm}(\sigma, M)$ are defined in a manner analogous to the case of motives over $\Q$, and for a given $\sigma$, the periods $c^{\pm}(\sigma, M)$ are well-defined as elements of $(\E \otimes \C)^{\times}$ mod $(\E\otimes \sigma(\F))^{\times}$.

\smallskip

Given a motive $M$ over $\F$ with coefficients in $\E$, the restriction of scalars functor gives a motive $R_{\F|\Q}(M)$ over $\Q$ with coefficients in $\E$ such that
\begin{enumerate}
\item $H_{DR}(R_{\F|\Q}(M)) = H_{DR}(M)$ as an $\E$-vector space of dimension $d(M) [\F : \Q]$, and
\item $H_{B}(R_{\F|\Q}(M)) = \bigoplus \limits_{\sigma \in I_{\F}} H_{B}(\sigma, M).$
\end{enumerate}
The periods $ c^{\pm}(R_{\F|\Q}(M))$ have the following factorization:
\begin{equation}
\label{factor}
 c^{\pm}(R_{\F|\Q}(M)) = (1 \otimes D^{1/2}_{\F})^{d^{\pm}(M)} \prod \limits_{\sigma \in I_{\F}} c^{\pm}(\sigma, M)
 \quad  \pmod{\E^{\times}}.
 \end{equation}
Here $D_{\F}$ is the absolute discriminant of $\F$. Such a factorization of periods has been observed by many;
see, for example, Blasius~\cite[M.8]{blasius}, Hida~\cite[p.442]{Hi} or
Panchishkin~\cite[p.995]{Pa}. This is closely related to a long history concerning Shimura's conjecture about factorization of periods related to  Hilbert modular forms; we refer the reader to Yoshida ~\cite{YoExpo} and Harris~\cite{harris} and the references therein.

\smallskip

\subsection{Period relations over totally real fields}
An analogue of Theorem \ref{thm:main-q} holds  for  motives over totally real fields under suitably modified hypotheses.

\begin{theorem}
\label{thm:totreal}
Let $M$ and $M'$ be motives defined over a totally real number field $\F$ with coefficients in a number field $\E$.
Suppose they satisfy the following properties:
\begin{enumerate}
\item rank($M$) = $d(M)$ is even, and rank($M'$) = $d(M')$ is odd.
\item All the nonzero Hodge numbers $h^{p,q}_{M}$ and $h^{p,q}_{M'}$ are equal to 1.
\item The motive $M \otimes M'$ is critical.
\end{enumerate}
Let $\epsilon(\sigma,M')$ be the scalar by which the complex conjugation $\rho_{\sigma}$ acts on the rank-one middle Hodge type of $H_B(\sigma, M')$.
\begin{enumerate}
\item[(i)]
For $\sigma \in I_{\F}$, as elements of $(\E \otimes \C)^{\times}$  we have
\[ \frac{c^{+}(\sigma, M\otimes M')}{c^{-}(\sigma,M\otimes M')} \ = \
\left( \frac{c^{+}(\sigma,M)}{c^{-}(\sigma,M)}\right)^{\epsilon(\sigma, M')}
\pmod{(\E\otimes \sigma(\F))^{\times}}.
\]
\item[(ii)] As elements of $(\E \otimes \C)^{\times}$  we have
\[
\frac{c^{+}(R_{\F|\Q}(M \otimes M'))}{ c^{-}(R_{\F|\Q}(M \otimes M'))} \ = \
\prod \limits_{\sigma \in  I_{\F}} \left( \frac{ c^{+}(\sigma, M)} {c^{-}(\sigma, M)} \right) ^{\epsilon(\sigma,M')}
\pmod{(\E\otimes \mathcal{F})^{\times}},
\]
where $\mathcal{F}$ is any subfield of $\C$ containing $\sigma(\F)$ for all $\sigma \in I_\F.$
\end{enumerate}
\end{theorem}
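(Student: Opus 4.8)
The plan is to obtain part~(i) by running, one real embedding at a time, exactly the argument of \S\ref{sec:proof}, and then to deduce part~(ii) from part~(i) by means of the factorization~(\ref{factor}).

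For part~(i), fix $\sigma \in I_\F$ and form the period matrices $X_\sigma$ of $M$ and $Y_\sigma$ of $M'$ built from the $\sigma$-Betti realizations $H_B(\sigma,M)$, $H_B(\sigma,M')$ together with Hodge-adapted bases of the de~Rham realizations base-changed along $\sigma$. The essential observation is that every combinatorial ingredient of Yoshida's formalism --- the filtration jumps $p_\mu$, the Hodge numbers $s_\mu$, the integers $\p^{+},\p^{-}$, $q^{+},q^{-}$, $a^{\pm}_\mu$ and $(a^{\ast})^{\pm}_\nu$, and hence the admissibility types (\ref{adm1}), (\ref{adm2}) of $\phi^{\pm}$, $\psi^{\pm}$ and of $f^{\pm}$ --- depends only on the Hodge numbers and weights of $M$, $M'$ and $M\otimes M'$, all of which are independent of $\sigma$. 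The one feature that \emph{does} vary with $\sigma$ is the sign by which $\rho_\sigma$ acts on the one-dimensional middle Hodge type of $H_B(\sigma,M')$, namely $\epsilon(\sigma,M')$; accordingly $d^{+}(\sigma,M)=d^{-}(\sigma,M)=d(M)/2$, one has $d^{+}(\sigma,M')=d^{-}(\sigma,M')+\epsilon(\sigma,M')$, and (the middle Hodge type of $M\otimes M'$ vanishing exactly as in \S\ref{sec:proof}) $d^{+}(\sigma,M\otimes M')=d^{-}(\sigma,M\otimes M')$. Thus the polynomial identities of \S\ref{sec:proof}, namely $\psi^{+}(y)\approx\psi^{-}(y)$ together with $\phi^{+}(x)f^{-}(x)\approx\phi^{-}(x)f^{+}(x)$ when $\epsilon(\sigma,M')=1$ and $\phi^{+}(x)f^{+}(x)\approx\phi^{-}(x)f^{-}(x)$ when $\epsilon(\sigma,M')=-1$, hold verbatim up to $\Q^{\times}$; evaluating them at $X_\sigma,Y_\sigma$, where Yoshida's invariants are well defined modulo $(\E\otimes\sigma(\F))^{\times}$, yields part~(i) with $\epsilon(M')$ replaced by $\epsilon(\sigma,M')$.

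For part~(ii), as in \S\ref{sec:proof} the hypotheses (1) and (2) together with the criticality of $M\otimes M'$ force $d^{+}(M\otimes M')=d^{-}(M\otimes M')$. Applying the factorization (\ref{factor}) to the motive $M\otimes M'$ over $\F$ and forming the ratio $c^{+}/c^{-}$, the discriminant factors $(1\otimes D_\F^{1/2})^{d^{\pm}(M\otimes M')}$ cancel, so that
\[
\frac{c^{+}(R_{\F|\Q}(M\otimes M'))}{c^{-}(R_{\F|\Q}(M\otimes M'))} \ = \ \prod_{\sigma\in I_\F}\frac{c^{+}(\sigma,M\otimes M')}{c^{-}(\sigma,M\otimes M')} \pmod{\E^{\times}}.
\]
Substituting part~(i) into each factor and multiplying produces the right-hand side of~(ii); since $\E^{\times}$ and each $(\E\otimes\sigma(\F))^{\times}$ lie in $(\E\otimes\mathcal F)^{\times}$, the combined congruence holds modulo $(\E\otimes\mathcal F)^{\times}$.

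The only real subtlety, and the step I would be most careful about, is the bookkeeping of the ambiguity groups. One must check that for a motive over $\F$ Yoshida's period invariants descend to classes in $(\E\otimes\C)^{\times}$ modulo $(\E\otimes\sigma(\F))^{\times}$ --- a change of the Hodge-adapted de~Rham basis, which a priori lies in a parabolic of $\GL(d,\E\otimes_\Q\F)$, affects these invariants only through its block-determinant characters, which land in $(\E\otimes_\Q\sigma(\F))^{\times}$ after applying $I_\sigma$ --- and then that multiplying the $[\F:\Q]$ local identities, each valid modulo a different subgroup $(\E\otimes\sigma(\F))^{\times}$, legitimately yields a single identity modulo $(\E\otimes\mathcal F)^{\times}$. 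There is no new geometric or combinatorial input beyond Theorem~\ref{thm:main-q}: part~(i) is that theorem embedding by embedding, and part~(ii) simply packages the $[\F:\Q]$ instances through restriction of scalars.
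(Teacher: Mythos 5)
Your proposal is correct and follows essentially the same route as the paper: part~(i) is the argument of Theorem~\ref{thm:main-q} run embedding by embedding (the paper says this \emph{mutatis mutandis}, and you simply make the substitutions explicit, including the correct ambiguity group $(\E\otimes\sigma(\F))^{\times}$), and part~(ii) is deduced from part~(i) via the factorization~(\ref{factor}) with the observation that $d^{+}(M\otimes M')=d^{-}(M\otimes M')$ makes the discriminant factor cancel. The extra care you take in tracking the ambiguity subgroups $\E^{\times}\subseteq(\E\otimes\sigma(\F))^{\times}\subseteq(\E\otimes\mathcal F)^{\times}$ is a sound amplification of a point the paper leaves implicit.
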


\begin{proof}
The proof of Theorem \ref{thm:main-q} gives the proof of (i) {\it mutatis mutandis} since the discussion involving Yoshida's results (in \S\ref{sec:yoshida-polynomials}) works over $\Q$. Next, (i) $\Longrightarrow$ (ii) follows from (\ref{factor}); note that the discriminant factor cancels out since
$d^+(M\otimes M') = d^-(M\otimes M') = d(M)d(M')/2.$
\end{proof}

\begin{remark}{\rm
Panchishkin has conjectured, based on suggestions from Beilinson, that there should exist
$\tilde{c}^{\pm}(\sigma, M) \in (\E \otimes \C)^\times$ well-defined modulo $\E^\times$ such that
for all $\sigma \in I_{\F}$ we have
\[
\tilde{c}^{\pm}(\sigma, M) \ = \  c^{\pm}(\sigma, M) \quad \pmod{(\E \otimes \sigma(F))^{\times}}.
\]
(See \cite[Conjecture 2.3]{Pa}.) Granting this, statement (ii) of Theorem~\ref{thm:totreal} can be conjecturally refined
as:
\[
\frac{c^{+}(R_{\F|\Q}(M \otimes M'))}{ c^{-}(R_{\F|\Q}(M \otimes M'))} \ = \
\prod \limits_{\sigma \in  I_{\F}} \left( \frac{ \tilde{c}^{+}(\sigma, M)} {\tilde{c}^{-}(\sigma, M)} \right) ^{\epsilon(\sigma,M')}
\pmod{\E^{\times}}.
\]
}\end{remark}

\section{Comments on when ranks of $M$ and $M'$ have the same parity}
\label{sec:parity}

\subsection{When the ranks of both $M$ and $M'$ are even.}

In this case, it follows from the hypotheses of Theorem~\ref{thm:main-q} that $d^{+}(M) = d^{-}(M)$, $d^{+}(M') = d^{-}(M')$ and $d^{+}(M \otimes M') = d^{-}(M \otimes M')$. Since the tensor product motive $M \otimes M'$ is critical by assumption, there exist integers $q^{+},~ q^{-}$ (as defined in
(\ref{hodgetensor})) such that
\[
u_1 + u_2 + \ldots + u_{q^{\pm}} \ = \ d^{\pm}(M \otimes M').
\]
Thus we have $q^{+} = q^{-}$ and as a result, equations (\ref{mu}) and (\ref{nu}) are satisfied. From the results of Yoshida (\textit{cf.}~\cite[Cor.1, p.1188]{Yo}) we get
\begin{equation}
\label{eqn:even-ranks}
c^{+}(M \otimes M') \ = \ c^{-}(M \otimes M').
\end{equation}

This ties up very well with known results on critical values. Consider a classical situation: suppose ${\sf f}$ and
${\sf g}$ are primitive holomorphic cusp forms of the same level and of weights $k$ and $l$. Suppose $k < l$ and $\psi$ is the nebentypus of ${\sf g}$. Let us look at the (degree four) Rankin-Selberg $L$-function
$L(s, {\sf f} \times {\sf g})$. Let $l \leq m < k.$ Then Shimura \cite[Theorem 4]{shimura} has proved
$$
L_f(m, {\sf f} \times {\sf g}) \ \sim \ (2 \pi i)^{2m-l-1} \gamma(\psi) u^+({\sf f})u^-({\sf f}).
$$
Suppose now that both $m$ and $m+1$ are critical then we get
$$
\frac{L_f(m, {\sf f} \times {\sf g})} {L_f(m+1, {\sf f} \times {\sf g})} \ \sim \ (2 \pi i)^{-2}.
$$
As in \S\ref{sec:classical} we can absorb the $(2 \pi i)^2$ into the ratio of $L$-factors at infinity and deduce:
\begin{equation}
\label{eqn:gl2-gl2}
L(m, {\sf f} \times {\sf g}) \ \sim \ L(m+1, {\sf f} \times {\sf g}).
\end{equation}
It is this statement about $L$-values for $\GL_2 \times \GL_2$ that is motivically interpreted in
(\ref{eqn:even-ranks}) for a tensor product of two rank two motives. The generalization of (\ref{eqn:gl2-gl2}) to the context of Rankin-Selberg $L$-functions for $\GL_n \times \GL_{n'}$, when both $n$ and $n'$ are even, is work in progress by
G\"unter~Harder and the second author; the results will appear elsewhere.

\subsection{}
Theorem \ref{thm:main-q}, (\ref{eqn:variation}) and (\ref{eqn:even-ranks}) can be combined together as:
\begin{theorem}
\label{thm:combined-even-odd}
Let $M$ and $M'$ be pure motives over $\Q$ with coefficients in a number field $\E$. Suppose they satisfy the following properties:
\begin{enumerate}
\item $M$ is critical and $d^{+}(M) = d^{-}(M)$.
\item $M'$ is a direct sum of critical motives.
\item $M \otimes M'$ is critical. 
\end{enumerate}
Then the periods  $c^{\pm}(M)$ and $c^{\pm}(M \otimes M')$ are related by the following equation in
$(\E \otimes_\Q \C)^\times/\E^\times$:
\[ \frac{c^{+}(M\otimes M')}{c^{-}(M\otimes M')}  \ = \ \left( \frac{c^{+}(M)}{c^{-}(M)}\right)^{\text{Tr}(\rho|_{H_{B}(M')})}. \]
\end{theorem}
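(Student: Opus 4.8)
The plan is to reduce the statement to the situation already handled by Theorem~\ref{thm:main-q} and then run the same admissibility-type computation, keeping a general integer in the exponent rather than $\pm 1$.

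First I would decompose $M' = \bigoplus_{j} M'_j$ into critical motives, each pure. Since $M\otimes M'$ is critical and $L_\infty(s, M\otimes M') = \prod_j L_\infty(s, M\otimes M'_j)$, and since archimedean $L$-factors have no zeros, no pole can be cancelled in this product; hence each $M\otimes M'_j$ is again critical. Using that Deligne's periods are multiplicative over direct sums (choose compatible bases so the period matrix of a direct sum is block-diagonal), $c^{\pm}(M\otimes M') \equiv \prod_j c^{\pm}(M\otimes M'_j) \pmod{\E^\times}$, and that $\mathrm{Tr}(\rho\,|\,H_{B}(M')) = \sum_j \mathrm{Tr}(\rho\,|\,H_{B}(M'_j))$ because the trace is additive over direct sums, it suffices to prove, for a single pure critical motive $M'$ (any one of the $M'_j$), the relation
\[
\frac{c^{+}(M\otimes M')}{c^{-}(M\otimes M')} \ = \ \left(\frac{c^{+}(M)}{c^{-}(M)}\right)^{t}, \qquad t := d^{+}(M') - d^{-}(M') = \mathrm{Tr}(\rho\,|\,H_{B}(M')),
\]
and then take the product over $j$.

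Next I would carry out the argument of \S\ref{sec:proof} in this generality. From $d^{+}(M) = d^{-}(M)$ we get $\p_{M}^{+} = \p_{M}^{-}$, so $f^{+}(x)$ and $f^{-}(x)$ have the same $\lambda_1$-component in their admissibility types, only the $\lambda_2$-components $(1,0)$ and $(0,1)$ being interchanged. Moreover $\mathrm{Tr}(\rho_{M\otimes M'}) = \mathrm{Tr}(\rho_M)\,\mathrm{Tr}(\rho_{M'}) = 0$, so $d^{+}(M\otimes M') = d^{-}(M\otimes M')$ and $q^{+} = q^{-}$; exactly as in \S\ref{sec:proof} this yields \eqref{mu} and \eqref{nu}, whence $\psi^{+}(y) \approx \psi^{-}(y)$ and the $\lambda_1$-components of $\phi^{+}(x)$ and $\phi^{-}(x)$ agree, the only difference between $\phi^{\pm}$ being the $\lambda_2$-component $(d^{+}(M'), d^{-}(M'))$ versus $(d^{-}(M'), d^{+}(M'))$. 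For $t\ge 0$, Lemma~\ref{multi} then shows that $\phi^{+}(x)\,f^{-}(x)^{t}$ and $\phi^{-}(x)\,f^{+}(x)^{t}$ have identical admissibility type, their common $\lambda_2$-component being $(d^{+}(M'), d^{+}(M'))$; by Yoshida's uniqueness \cite[Theorem 1]{Yo} they agree up to $\Q^{\times}$, and multiplying by $\psi^{\pm}(y)$ and evaluating on the period matrices $X$ and $Y$ gives $c^{+}(M\otimes M')\,c^{-}(M)^{t} \approx c^{-}(M\otimes M')\,c^{+}(M)^{t}$, which is the displayed relation. For $t\le 0$ one uses $f^{+}(x)^{-t}$ and $f^{-}(x)^{-t}$ in place of $f^{-}(x)^{t}$ and $f^{+}(x)^{t}$ and obtains the same conclusion.

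I expect the only delicate points to be the two reductions in the first step — that criticality of $M\otimes M'$ forces criticality of each summand $M\otimes M'_j$, and that $c^{\pm}$ is multiplicative over direct sums — both of which are standard in Deligne's formalism but should be stated explicitly, and the mild bookkeeping of the sign of $t$ when distributing powers of $f^{\pm}$ across $\phi^{+}$ and $\phi^{-}$. The substantive computation is otherwise identical to the proof of Theorem~\ref{thm:main-q}, with $\epsilon(M')$ replaced throughout by the integer $t = \mathrm{Tr}(\rho\,|\,H_{B}(M'))$; in particular cases (i) and (ii) of \S\ref{sec:proof} are the instances $t = 1$ and $t = -1$, and \eqref{eqn:variation} and \eqref{eqn:even-ranks} are the instances $t = \epsilon(M')k$ with $M'$ of odd rank and $t = 0$ respectively.
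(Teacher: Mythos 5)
Your argument is correct and is essentially the proof the paper intends: the authors state that Theorem~\ref{thm:combined-even-odd} is obtained by combining Theorem~\ref{thm:main-q} with its variants (\ref{eqn:variation}) and (\ref{eqn:even-ranks}), which is exactly the decompose-and-multiply strategy you spell out (reduce to pure critical summands $M'_j$, prove the relation for each, and take the product using multiplicativity of $c^{\pm}$ over direct sums and additivity of the trace). The two reductions you flag as delicate are indeed what the paper leaves implicit, and your unified admissibility-type computation with exponent $t=\mathrm{Tr}(\rho\,|\,H_B(M'_j))$ is just a clean repackaging of the paper's separate cases $t=\pm1$, $t=\epsilon(M')k$, and $t=0$.
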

We leave it to the reader to formulate the analogous statement for motives over a totally real field $\F$ with coefficients in $\E.$

\medskip

\subsection{When the ranks of both $M$ and $M'$ are odd.}

Let $M,M'$ be pure motives defined over $\Q$ with coefficients in a number field $\E$.  Suppose that both the ranks  $d(M)$ and $d(M')$ are odd. Similar to the earlier even-odd case, assume that all Hodge numbers $h^{p,q}(M),h^{p,q}(M')$ are less than or equal to one. From this it follows that $M$ and $M'$ are critical and they have non-zero middle Hodge types. Let $\epsilon(M)$ (resp., $\epsilon(M')$) be the scalar by which complex conjugation acts on the middle Hodge type of $M$ (resp., $M'$). Suppose the tensor product motive $M \otimes M'$ is also critical. It follows that
\begin{equation} \label{dim} d^{+}(M \otimes M') - d^{-}(M \otimes M') = \epsilon(M) \epsilon(M').
\end{equation}
Consider the Hodge filtrations on the de~Rham realizations of the motives $M, M'$ and $M\otimes M'$.
  Let $u_t$ be the $t^{th}$ Hodge number of $M \otimes M'$, i.e., the $\E$-dimension of $F^{k_{t}}(M \otimes M')/F^{k_{t+1}}(M \otimes M')$.
Let $1 \leq r \leq d(M)$, $1 \leq s \leq d(M')$. Let $\left\{ v_r\right\}$, $\left\{w_s\right\}$ be $\E$-basis of the one dimensional quotient spaces $F^{i_{r}}(M)/F^{i_{r+1}}(M)$, $F^{j_{s}}(M')/F^{j_{s+1}}(M')$ respectively. Then it follows that the set  $\left\{v_r \otimes w_s : i_r + j_s = k_t \right\}$ of classes modulo $F^{k_{t+1}}(M \otimes M')$ forms a basis of the quotient space $F^{k_{t}}(M \otimes M')/F^{k_{t+1}}(M \otimes M')$. The size of this set is $u_t$.
Since the motive $M \otimes M'$ is critical, there exist integers $q^{\pm}$ such that 
$$d^{\pm}(M \otimes M) = \sum \limits_{t \leq q^{\pm}} u_t.$$
Thus,  $d^{\pm}(M \otimes M) = |\left\{ (r,s) : 1 \leq r \leq d(M), 1 \leq s \leq d(M'), i_{r}+j_{s} \leq k_{q^{\pm}} \right\}|$.
From (\ref{dim}) above, we have $q^{+} = q^{-} + \epsilon(M) \epsilon(M'), ~u_{\text{max}(q^{+},q^{-})} =1$. For $1 \leq r \leq d(M)$, let $a^{\pm}_r = |\left\{ s : 1 \leq s \leq d(M'),~ i_r + j_s \leq k_q^{\pm} \right\}|$.
\vspace{0.5 cm}

\noindent \textbf{Case 1} : $q^+ = q^{-} + 1$.\\

\noindent We have
 $$ a^{+}_r  - a^{-}_r = |\left\{ s : 1 \leq s \leq d(M'),~  k_q^{-} < i_r + j_s \leq k_q^{+} \right\}|.$$
 Since $u_{q^{+}} =1$, it follows that there exists unique $(r_0, s_0)$ such that
 \begin{itemize}
 \item $1 \leq r_0 \leq d(M),~ 1 \leq s_0 \leq d(M')$.
 \item $i_{r_0} + j_{s_0} = k_{q^{+}}$.
 \item $ a^{+}_{r_0}  - a^{-}_{r_0} = 1$.
 \item $ a^{+}_r  = a^{-}_r, \quad \forall ~ r \neq r_0.$
 \end{itemize}
(In fact $r_0 = d^{-}(M)+ (\epsilon(M) +1)/2$ and $s_0 = d^{-}(M')+ (\epsilon(M') +1)/2$.) \\

\noindent \textbf{Case 2} : $q^+ = q^{-} - 1$.\\

\noindent We have
 $$ 
 a^{-}_r  - a^{+}_r = |\left\{ s : 1 \leq s \leq d(M'),~  k_q^{+} < i_r + j_s \leq k_q^{-} \right\}| .
 $$
Since $u_{q^{-}} =1$, it follows that there exists unique $(r_0, s_0)$ (same as in Case 1) such that
 \begin{itemize}
 \item $1 \leq r_0 \leq d(M),~ 1 \leq s_0 \leq d(M')$.
 \item $i_{r_0} + j_{s_0} = k_{q^{-}}$.
 \item $ a^{-}_{r_0}  - a^{+}_{r_0} = 1$.
 \item $ a^{+}_r  = a^{-}_r \quad \forall ~ r \neq r_0.$ \\
 \end{itemize}

Define the ``dual'' data as follows. For $ 1 \leq s \leq d(M')$, let $a^{\ast,\pm}_s = |\left\{ 1 \leq r \leq d(M) : a^{\pm}_r \geq s \right\}|$. It follows (from
 an argument similar to the case of $a^{\pm}_r$) that, 
 $a^{\ast, +}_{s_{0}}  - a^{\ast, -}_{s_{0}}  = q^{+} - q^{-}$  and $a^{+}_{s} = a^{-}_s \quad \forall ~ s \neq s_0.$

 The following is the analogue of Theorem \ref{thm:main-q} in the situation where both motives are of odd rank.

 \begin{theorem}
\label{thm:main-odd}
Let $M$ and $M'$ be pure motives over $\Q$ with coefficients in a number field $\E$. Suppose they satisfy the following properties:
\begin{enumerate}
\item rank($M$) = $d(M)$ and rank($M'$) = $d(M')$ are odd.
\item All the nonzero Hodge numbers $h^{p,q}_{M}$ and $h^{p,q}_{M'}$ are equal to 1.
\item The tensor product motive $M \otimes M'$ is critical.
\end{enumerate}
Hypothesis (2) implies that the complex conjugation acts as a scalar, denoted
$\epsilon(M)$, (resp., $\epsilon(M')$), on the one-dimensional middle Hodge type of $M$ (resp., $M'$).
\smallskip
Then the periods  $c^{\pm}(M)$, $c^{\pm}(M')$  and $c^{\pm}(M \otimes M')$ are related by the following equation in
$(\E \otimes_\Q \C)^\times/\E^\times$:
\[ \frac{c^{+}(M\otimes M')}{c^{-}(M\otimes M')}  \ = \ \left( \frac{c^{+}(M)}{c^{-}(M)}\right)^{\epsilon(M')} \left( \frac{c^{+}(M')}{c^{-}(M')}\right)^{\epsilon(M)}. \]
\end{theorem}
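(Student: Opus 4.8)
The plan is to mimic the proof of Theorem~\ref{thm:main-q}, working entirely at the level of the invariant polynomials $\phi^\pm(x)$, $\psi^\pm(y)$ and $f^\pm(x)$, $f^\pm(y)$, and using Yoshida's uniqueness theorem \cite[Theorem~1]{Yo} together with the multiplicativity Lemma~\ref{multi} to compare admissibility types. Writing $X,Y$ for the period matrices of $M,M'$, we have $c^\pm(M\otimes M') = h^\pm(X,Y)$ with $h^\pm(x,y) = \phi^\pm(x)\psi^\pm(y)$, where $\phi^\pm(x)$ has admissibility type $\{(a^\pm_r)_{1\le r\le d(M)};\,(d^\pm(M'),d^\mp(M'))\}$ and $\psi^\pm(y)$ has type $\{(a^{*,\pm}_s)_{1\le s\le d(M')};\,(d^\pm(M),d^\mp(M))\}$ as in \eqref{adm1} and \eqref{adm2}. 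The preparatory combinatorial analysis already carried out in \S\ref{sec:parity} identifies exactly how these types differ: in both Case~1 and Case~2 there is a single index $r_0 = d^-(M)+(\epsilon(M)+1)/2$ with $a^+_{r_0}-a^-_{r_0} = \pm 1$ and $a^+_r = a^-_r$ for $r\ne r_0$, and dually a single index $s_0 = d^-(M')+(\epsilon(M')+1)/2$ with $a^{*,+}_{s_0}-a^{*,-}_{s_0} = \pm 1$ and $a^{*,+}_s = a^{*,-}_s$ for $s\ne s_0$. So the whole proof reduces to recognizing that the difference between the type of $\phi^+$ and that of $\phi^-$ (in the $\GL$-character part) is the type of $f^{\pm}(x)$, and likewise for $\psi$.

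Concretely, I would split into the two cases of \S\ref{sec:parity}. In Case~1 ($q^+ = q^-+1$, which by \eqref{dim} means $\epsilon(M)\epsilon(M') = 1$), the $P$-character of $\phi^+(x)$ exceeds that of $\phi^-(x)$ by $1$ precisely in the $r_0$-th slot; since $r_0$ is the index where the first $d^+(M)$ coordinates end when $\epsilon(M)=+1$ (respectively where the block structure places it when $\epsilon(M)=-1$), this extra character is exactly the type of $f^{\text{sgn}}(x)$ for the appropriate sign, where $f^+(x)$ has type $\{(1,\dots,1,0,\dots,0)_{\p^+_M};(1,0)\}$ and $f^-(x)$ has type $\{(1,\dots,1,0,\dots,0)_{\p^-_M};(0,1)\}$ — one must also check the $\GL(d^+)\times\GL(d^-)$-part matches, i.e. that $d^\pm(M') = d^\mp(M') + (\text{the relevant sign})$, which is where $\epsilon(M')$ enters. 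By Lemma~\ref{multi} and the uniqueness theorem, this gives an identity of the form $\phi^+(x)\,f^{?}(x) \approx \phi^-(x)\,f^{??}(x)$ up to $\Q^\times$, and symmetrically $\psi^+(y)\,f^{?}(y) \approx \psi^-(y)\,f^{??}(y)$, with the signs on $f$ dictated by $\epsilon(M')$ and $\epsilon(M)$ respectively. Multiplying the two relations, forming $h^\pm(x,y) = \phi^\pm(x)\psi^\pm(y)$, and evaluating on the period matrices $X,Y$ (so that $f^\pm(X) = c^\pm(M)$, $f^\pm(Y) = c^\pm(M')$), one reads off
\[
\frac{c^+(M\otimes M')}{c^-(M\otimes M')} = \left(\frac{c^+(M)}{c^-(M)}\right)^{\epsilon(M')}\left(\frac{c^+(M')}{c^-(M')}\right)^{\epsilon(M)}
\]
in $(\E\otimes_\Q\C)^\times/\E^\times$. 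Case~2 ($q^+ = q^--1$, i.e. $\epsilon(M)\epsilon(M') = -1$) is handled by the same bookkeeping with the roles of $+$ and $-$ interchanged in the relevant places; the final formula is the same because the exponents $\epsilon(M),\epsilon(M')$ are themselves signs and the identity is insensitive to which case we are in.

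The main obstacle I anticipate is bookkeeping rather than conceptual: getting the four sign choices consistent — namely matching the "extra" $\GL$-character in the type of $\phi^+$ versus $\phi^-$ with the correct one of $f^+(x)$ or $f^-(x)$, and simultaneously checking that the $\GL(d^+(M'))\times\GL(d^-(M'))$-part of the admissibility type of $\phi^\pm(x)$ together with the multiplication by $f$ lands on the type that uniqueness forces. Because $d^+(M') = d^-(M') + \epsilon(M')$ and $d^+(M) = d^-(M) + \epsilon(M)$ (from hypothesis (2) and oddness), the four sub-cases in $(\epsilon(M),\epsilon(M')) \in \{\pm1\}^2$ each require a short but careful verification, exactly parallel to Cases (i) and (ii) in the proof of Theorem~\ref{thm:main-q}; the new feature compared to that theorem is only that now the $\psi$-factor also contributes a nontrivial ratio $c^\pm(M')$ rather than cancelling, since $d^+(M')\ne d^-(M')$. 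Once the four sign verifications are done, assembling the identity and passing to period matrices is immediate.
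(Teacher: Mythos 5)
Your proposal follows essentially the same approach as the paper: compare admissibility types of $\phi^\pm f^\mp$ (and the analogous products in the $y$-variable for $\psi^\pm$) via Lemma~\ref{multi} and Yoshida's uniqueness, split into cases according to the sign of $q^+ - q^-$, and evaluate on period matrices. One imprecision worth flagging: the single-slot discrepancy between the $P$-characters of $\phi^+$ and $\phi^-$ is not ``the type of $f^{\mathrm{sgn}}(x)$'' (that type has $1$'s in the first $\p^{\mathrm{sgn}}$ slots, not a single $1$ at position $r_0$), but rather equals the \emph{difference} between the $P$-characters of $f^+$ and $f^-$ --- which is precisely why $\phi^+ f^\mp$ and $\phi^- f^\pm$ end up with matching admissibility types, as you correctly assert in the very next clause of your argument.
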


\medskip

\begin{proof}
From the definition of the Deligne periods, it follows that $c^{+}(M \otimes M')$ is the determinant of the square matrix $Z^+$ of
 size $d^{+}(M) d^{+}(M') + d^{-}(M) d^{-}(M')$ defined by
\[
Z^{+} = (X_r^{+}\otimes Y_s^{+},~X_r^{-}\otimes Y_s^{-} : 1 \leq r \leq d(M), ~ 1 \leq s \leq d(M') : i_r + j_s \leq k_{q^{+}}).
\]
Similarly, one observes that $c^{-}(M \otimes M')$ is the determinant of the square matrix $Z^-$ of size
 $d^{+}(M) d^{-}(M') + d^{-}(M) d^{+}(M')$ defined by
\[
Z^{-} = (X_r^{+} \otimes Y_s^{-}, ~X_r^{-}\otimes Y_s^{+}  : 1 \leq r \leq d(M), ~ 1 \leq s \leq d(M') : i_r + j_s \leq k_{q^{-}}).
\]

Recall that the determinants of $Z^{\pm}$ can be expressed in the form $h^{\pm}(X,Y),$ where $h^{\pm}(x,y)$ are polynomial functions. For a fixed $y$, the function $h^{\pm}(x,y)$  has admissibility type
  \[
  \left\{(a^{\pm}_{\mu} : 1 \leq \mu \leq d(M)), (d^{\pm}(M'), d^{\mp}(M'))\right\}.\]
%
The polynomials $h^{\pm}(x,y)$ can be expressed as $h^{\pm}(x,y) = \phi^{\pm}(x) \psi^{\pm}(y)$ where  $\phi^{\pm}(x)$ and $\psi^{\pm}(y)$ are polynomials with the following admissibility types respectively (\ref{adm1}, \ref{adm2}):
  \begin{equation}  \left\{a^{\pm}_{\mu} : 1 \leq \mu \leq d(M)), ~(d^{\pm}(M'), d^{\mp}(M'))\right\}, \end{equation}
 \begin{equation} \left\{(a^{\ast, \pm}_{\nu} : 1 \leq \nu \leq d(M')),~(d^{\pm}(M), d^{\mp}(M)) \right\}.\end{equation}

The proof follows from the same arguments those in the proof of Theorem \ref{thm:main-q}. We give the proof for the case when $\epsilon(M) =\epsilon(M') =+1$ as a sample case. 

Here $q^+ = q^- + 1$. So $a^{+}_r = a^{-}_r$ except when $r =d^{+}(M)$ and $a^{+}_{d^{+}(M)} = a^{-}_{d^{+}(M)} + 1$. Similarly $a^{\ast,+}_s = a^{\ast, -}_s$ except when $s =d^{+}(M')$ and $a^{\ast,+}_{d^{+}(M')} = a^{\ast,-}_{d^{+}(M')} + 1$. It can be seen that both $\phi^{+} f^{-}$ and $\phi^{-} f^{+}$ are of same admissibility type given by
{\small
$$
\left\{(a^{+}_{1} + 1, ~a^{+}_{2}+1, ~\ldots, ~a^{+}_{d^{-}(M)}+1,~ a^{+}_{d^{+}(M)} = a^{-}_{d^{+}(M)} + 1, ~\ldots,~ a^{+}_{d(M)}),(d^{+}(M'),~d^{+}(M'))\right\}.
$$}
On the other hand, $\psi^{+} g^{-}$ and $\psi^{-} g^{+}$ are of same admissibility type given by 
{\small
 $$
 \left\{(a^{\ast,+}_{1} + 1, ~a^{\ast, +}_{2}+1, ~\ldots, ~a^{\ast,+}_{d^{-}(M')}+1,~ a^{\ast, +}_{d^{+}(M')} = a^{\ast, -}_{d^{+}(M')}+1, ~\ldots,~ a^{\ast, +}_{d(M')}),(d^{+}(M),~d^{+}(M))\right\}.
 $$}
 The desired relation follows from the uniqueness results in \cite{Yo}.
 \end{proof}

It is an amusing exercise to use the special values of Dirichlet $L$-functions 
(see, for example, Neukirch \cite[p.442]{Neu}) and the special values of the symmetric square $L$-functions attached to holomorphic primitive elliptic cusp forms (due to Sturm \cite{sturm}) to produce examples illustrating 
Theorem~\ref{thm:main-odd} when ${\rm rank}(M)=1$ and ${\rm rank}(M') = 1 \ {\rm or} \ 3.$


\begin{thebibliography}{1000000}

\bibitem[Bla87]{blasius-orloff} Blasius, D.,  
{\it Appendix to Orloff Critical values of certain tensor product $L$-functions.}
Invent. Math., 90, (1987), no.1, 181--188.

\bibitem[Bla97]{blasius}
Blasius, D., 
{\it Period relations and
critical values of $L$-functions.}
Olga Taussky-Todd: in memoriam.
Pacific J. Math., (1997), Special Issue, 53--83.


\bibitem[Clo90]{clozel}
Clozel, L., 
{\it Motifs et formes automorphes: applications du principe de
fonctorialit\'e.} (French) [Motives and automorphic forms: applications of
the functoriality principle]
Automorphic forms, Shimura varieties, and $L$-functions, Vol. I (Ann Arbor, MI, 1988),
77--159, Perspect. Math., 10, Academic Press, Boston, MA, 1990.


\bibitem[Del79]{deligne}
Deligne, P., 
{\it Valeurs de fonctions $L$ et p\'eriodes d'int\'egrales (French),}
With an appendix by N. Koblitz and A. Ogus.
Proc. Sympos. Pure Math., XXXIII, Automorphic forms, representations and $L$-functions (Proc.
Sympos. Pure Math., Oregon State Univ., Corvallis, Oregon, 1977), Part 2,
pp. 313--346, Amer. Math. Soc., Providence, R.I., 1979.

\bibitem[Hard10]{harder}
Harder, G., {\it Arithmetic aspects of rank one Eisenstein cohomology.} Cycles, motives and Shimura varieties, 131--190, Tata Inst. Fund. Res. Stud. Math., Tata Inst. Fund. Res., Mumbai, 2010.

\bibitem[HaRa11]{harder-raghuram}
Harder, G., and Raghuram, A.,
{\it Eisenstein cohomology and ratios of critical values of Rankin-Selberg $L$-functions.}
C. R. Acad. Sci. Paris, Ser. I 349 (2011) 719--724.

\bibitem[Harr93]{harris}
Harris, M.,
{\it L-functions of  $2 \times 2$ unitary groups and factorization of periods of Hilbert modular forms.}
Jour. of AMS, Volume 6, Number 3 (1993) 637--719.

\bibitem[Harr97]{harris97}
Harris, M.,
{\it $L$-functions and periods of polarized regular motives.}
J. Reine Angew. Math., 483, (1997), 75–-161.

\bibitem [Hid94]{Hi} Hida, H., \emph{On the critical values of $L$-functions of $GL(2)$ and 
$\text{GL}(2) \times \text{GL}(2)$}, Duke Mathematical Journal, 74(2) (1994), 431--529.

\bibitem [Neu99]{Neu} Neukirch, J., {\it Algebraic number theory}, Springer-Verlag Berlin Heidelberg, (1999).

\bibitem [Pan94] {Pa} Panchishkin, A., \emph{Motives over totally real fields and $p$-adic $L$-functions}, Annales de l'Institut Fourier, 44(4) (1994), 989-1023.


\bibitem[Shi76]{shimura76}
G. Shimura,
{\it The special values of zeta functions associated with cusp forms,}
Comm. Pure and Appl. math., vol. XXIX, 783--804, (1976).

\bibitem[Shi77]{shimura}
G. Shimura,
{\it On the periods of modular forms,}
Math. Ann.,  229, no. 3, 211--221, (1977).

\bibitem[Stu80]{sturm}
 Sturm, J., \emph{Special values of zeta functions, and Eisenstein series of half integral weight.}
 Amer. J. Math. 102 (1980), no. 2, 219--240.

\bibitem [Yos95] {YoExpo}
Yoshida, H.,
\emph{On a conjecture of Shimura concerning periods of Hilbert modular forms}, Amer.\ J.\ Math., 117 (1995), no.4, 1019--1038. 

\bibitem [Yos01] {Yo} Yoshida, H., \emph{Motives and Siegel modular forms}, Amer.\ J.\ Math., 123 (2001), 1171--1197.



\end{thebibliography}
\end{document}